\newtheorem{theorem}{Theorem}[section]
\newtheorem{lemma}{Lemma}[section]
\newtheorem{corollary}{Corollary}[section]
\theoremstyle{remark}
\newtheorem{rmk}{Remark}
\newtheorem*{example}{Example: the Wilson matrix}
\theoremstyle{definition}
\newtheorem{defn}{Definition}
 \font\smallit=cmti10
 \font\smalltt=cmtt10
\newcommand{\be}{\begin{equation}}
\newcommand{\ee}{\end{equation}}
\newcommand{\ben}{\begin{enumerate}}
\newcommand{\een}{\end{enumerate}}
\def\emx{{\cal E}}
\def\imx{{\cal I}}
\def\omx{{\cal O}}
\def\wt{\mathop{\rm wt}}
\def\rank{\mathop{\rm rk}}
\def\rank{\mathop{\mathrm{rank}}}
\def\adj{\mathop{\rm adj}}
 \long\def\symbolfootnote[#1]#2{\begingroup%
\def\thefootnote{\fnsymbol{footnote}}\footnote[#1]{#2}\endgroup}
\begin{document}
\renewcommand\refname{\normalsize References}

\begin{center}
 {\bf INTEGER MATRIX FACTORISATIONS, SUPERALGEBRAS AND THE QUADRATIC FORM OBSTRUCTION}
 \vskip 20pt
 {{\bf Nicholas J. Higham$^a$}, {\bf Matthew C. Lettington$^b$} and {\bf Karl Michael Schmidt$^b$}}\\
  {\smallit $^a$Department of Mathematics, University of Manchester, Manchester, M13 9PL, UK}\\
 {\smallit $^b$School of Mathematics, Cardiff University, 23 Senghennydd Road, Cardiff, UK CF24 4AG}\\
 {\smalltt nick.higham@manchester.ac.uk: LettingtonMC@cardiff.ac.uk: SchmidtKM@cardiff.ac.uk}\\
 \end{center}
 \vskip 30pt

\begin{abstract}

\noindent
{We identify and analyse obstructions to factorisation of integer matrices
into products $N^T N$ or $N^2$ of matrices with rational or integer
entries.  The obstructions arise as quadratic forms with integer
coefficients and raise the question of the discrete range of such
forms. They are obtained by considering matrix decompositions over a
superalgebra.  We further obtain a formula for the determinant of a square
matrix in terms of adjugates of these matrix decompositions, as well as
identifying a \emph{co-Latin} symmetry space.}

\end{abstract}


 \baselineskip=15pt

\def\rc#1{\frac{1}{#1}}
\def\cases#1{\left\{\begin{matrix} #1 \end{matrix}\right.}
\def\pmatrix#1{\left(\begin{matrix} #1 \end{matrix}\right)}
\def\nonpmatrix#1{\begin{matrix} #1 \end{matrix}}


\section{\large Introduction}
\label{intro}
The question whether a given square integer matrix $M$ can be factorised
into a product of two
integer matrices, either in the form of a
square $M=N^2$ or (in case of a symmetric positive definite matrix)
in the form $M=N^T N$,
has a long history in number theory. It is known that if $n\leq 7$ and $M$
is an $n\times n$ symmetric positive definite matrix with integer entries
and determinant 1, then a factorisation $M=N^T N$ with $N$ an $n\times n$
matrix with integer entries exists. However, there are examples of such
matrices with dimension $n=8$ which cannot be factorised in this way.  This
result is mentioned by Taussky (see \cite[p.~812]{taussky1}) and goes
back to Hermite, Minkowski, and Mordell Mordell~\cite{mordell1}.

\symbolfootnote[0]{
2010 \emph{Mathematics Subject Classification}: 15A23, 15CA30, 11H55, 05B15.\newline
\emph{Key words and phrases}: integer matrix factorisation, matrix superalgebra, quadratic forms, latin squares, adjugate matrix.}

The number theoretic properties relating to the factorisation of symmetric
positive definite $n\times n$ integer matrices $M$ with fixed determinant
have classical connections to the theory of positive definite quadratic
forms in $n$ variables, see e.g. \cite{mordell2} and the above references.

In particular, Mordell considered the similarity classes of $n\times n$
matrices with determinant 1, where two such matrices $L$, $M$ are in the
same class if there exists a unimodular integral matrix $N$ such that
$M= N^T LN$. The number of such similarity classes is denoted by
$h_n$. Then a matrix $M$ is in the similarity class of $I_n$ (the identity
matrix) if and only if there exists a factorisation $M= N^T N$ with an
integer matrix $N$. This implies that the quadratic form classically
associated with the symmetric matrix $M$, $q(x)= x^T M x$, can be written
as \be q(x)=x^T M x
    = x^TN^T Nx
    = y^Ty = \sum_{j=1}^n y_j^2,
\label{eq:qtemp}
\ee
where $y= N x$, and $N$ has determinant 1. Thus, the factorisation can be
used to write the quadratic form $q(x)$ as a sum of squares of $n$ linear
factors.  When $n=8$, such a factorisation may not exist, as Minkowski
proved in 1911 that $h_n\geq [1+n/8]$, so $h_n\geq 2$ if $n=8$. Mordell
showed that $h_8=2$ \cite{mordell2}, and Ko showed that $h_9 = 2$ as well \cite{ko1}.

In the present paper, we revisit the question of integer matrix
factorisation in the light of recent general results on matrix
decompositions \cite{slh1}, \cite{slh2}.
We establish in Corollary~\ref{thm:23}
that the existence of integer
solutions to a certain quadratic equation
is a necessary condition for a matrix
factorisation of the type $M=N^2$ or $M=N^TN$ (for symmetric positive
definite $M$) to exist.  It is interesting to
note that solutions to this new type of quadratic equation associated with a
given integer matrix $M$ can also lead to rational matrix factors $N$ with
entries in $\frac{1}{n^2}\mathbb{Z}$.

Throughout the paper, we use the classical example of the
Wilson matrix \cite{higham1}, \cite{higham2}, \cite{moler1}, \cite{morr46}
\be
W = \left ( \begin{array}{cccc}
5 &7& 6& 5 \\
 7 &10& 8& 7 \\
 6& 8& 10& 9 \\
5& 7& 9 &10 \\
\end{array}\right )
\label{eq:wilson}
\ee
to demonstrate the methodologies under consideration.
This integer matrix has determinant 1 and hence an integer inverse matrix, but is moderately ill conditioned, despite its small size and entries.
It has the integer factorisation $W = Z^T Z$ discovered in
\cite{higham2} with
\be
Z = \left ( \begin{array}{cccc}
2 & 3 & 2 & 2 \\
1 & 1 & 2 & 1 \\
0 & 0 & 1 & 2 \\
0 & 0 & 1 & 1
\end{array}\right ).
\label{eq:wilroot}
\ee
We note that the entries of $Z$ are nonnegative and, although the matrix is not triangular, it has a block upper triangular structure and can be thought of as a block Cholesky factor of $W$.

The quadratic form associated with the Wilson matrix can be written,
using (\ref{eq:qtemp}) and \eqref{eq:wilroot}, as a sum of four squares:
\begin{align}
\label{eq:qform0}
q(x) = x^T W x &= (x_3+x_4)^2+(x_3+2 x_4)^2+(x_1+x_2+2 x_3+x_4)^2  \nonumber \\
     & \quad {} +(2 x_1+3 x_2+2 x_3+2 x_4)^2.
\end{align}
As $Z$ is a unimodular integer matrix and hence has
an integer inverse, it follows by Lagrange's four-square theorem that the
quadratic form $q$ generated by the Wilson matrix is universal
\cite{conway99} in the sense that it generates all positive integers as $x$
ranges over $\mathbb{Z}^4$.

This shows that integer matrix factorisation is a valuable tool in studying
the quadratic form generated by an integer matrix. As a result of our
considerations in Section 3 below, the question of factorising the Wilson
matrix in the form $W=Z^TZ$ is associated with the solutions of the
quadratic equation
\begin{equation}\label{eq:qform1}
2 w^2+x_1^2+x_1 x_2+x_1 x_3+x_2^2+x_2 x_3+x_3^2=952.
\end{equation}
Indeed, a necessary (but not sufficient) condition for $W$ to factorise is
that integer solutions $(w,x_1,x_2,x_3)$ to the quadratic equation
(\ref{eq:qform1}) exist.
Thus this equation can be
considered a quadratic form obstruction to integer factorisability.  This
approach to integer matrix factorisation was briefly alluded to, but not
fully worked out in \cite{higham2}.

In Section~\ref{sec.sv-decomp-matr} we derive a useful
$\mathrm{S}+\mathrm{V}$ decomposition of square matrices, first identified in \cite{slh2}, and
give explicit formulas for constructing it
that were not given in \cite{slh2}.
In Section~\ref{sec:quad}, we use the
concept of matrix weight, associated with the S factor,
to establish the quadratic form obstruction to
integer matrix factorisation and show how a solution of the corresponding
quadratic equation can be used to calculate the matrix factors.  In
Section~\ref{sec.determ-decomp-adjug}, we discuss adjugate matrices in view
of the matrix decomposition and in particular show that the type S part of
a matrix is characterised by having an adjugate with all equal
entries. Finally, in Section~\ref{sec:colatin} we identify the type V part
of a matrix as belonging to a space of co-Latin squares, defined as square
matrices with constant sum over all entries carrying the same symbol in any
Latin square.

\section{\large The $\mathrm{S}+\mathrm{V}$ decomposition of matrices}\label{sec.sv-decomp-matr}

The following symmetries of ${n\times n}$ matrices were considered in
\cite{slh2}.

(S) A matrix $M = (m_{i,j})_{i,j=1}^n\in{\mathbb R}^{n\times n}$ has the
{\it constant sum property\/} (or is of type S) if there is a number
$w\in {\mathbb R}$, called the {\it weight\/} of the matrix, such that
\[
\sum_{j=1}^n m_{i,j}=\sum_{j=1}^n m_{j,i}=nw \qquad (i \in \{1, \dots, n\}).
\]
The vector subspace of ${\mathbb R}^{n \times n}$ of matrices having the constant sum
property with some weight is denoted by $S_n$ and can be characterised as
\[
S_n = \bigl\{M \in {\,\mathbb R}^{n \times n} : 1_n^T M u = 0 = u^T M 1_n\; (u \in
                 \{1_n\}^\bot)\,\bigr\},
\]
where $1_n \in {\mathbb R}^n$ is the column vector with all entries equal
to 1 and orthogonality is with respect to the standard inner product,
$\{1_n\}^\bot = \{u \in {\mathbb R}^n : u^T 1_n = 0\}$ (cf.\
\cite[Thm.~2.6~(a)]{slh2}).

(V) A matrix $M = (m_{i,j})_{i,j=1}^n\in{\mathbb R}^{n\times n}$ has the
{\it vertex cross sum property\/} (or is of type V) if
\[
m_{i,j}+m_{k,l}=m_{i,l}+m_{k,j} \quad (i,j,k,l \in \{1, \dots, n\})
\]
and the matrix entries sum to zero, $\sum_{i,j=1}^n m_{i,j} = 0$.  The
vector subspace of ${\mathbb R}^{n \times n}$ of matrices having the vertex
cross sum property is denoted by $V_n$ and can be characterised as
\begin{equation}
 V_n = \bigl\{\,M \in {\,\mathbb R}^{n \times n} : u^T M v = 0\; (u, v\in \{1_n\}^\bot), \ 1_n^T M 1_n = 0\,\bigr\}
\label{eq:vprop}
\end{equation}
(cf.\ \cite[Thm.~2.6~(e)]{slh2}).
(We derive a surprising alternative characterisation of this space in Section \ref{sec:colatin}.)

The spaces $S_n$ and $V_n$ only have the null matrix in common; in fact,
they complement each other and give ${\mathbb R}^{n\times n}$ a
superalgebra structure in the following way
(cf.\ \cite[Thm.~2.5~(a)]{slh2}).

\begin{theorem}
\label{thm:sv}
${\mathbb R}^{n\times n} = S_n \oplus V_n$.
Moreover, $S_n$ is a subalgebra of ${\mathbb R}^{n\times n}$, and
\[
 S_n S_n \subset S_n, \quad S_n V_n \subset V_n, \quad V_n S_n \subset V_n, \quad V_n V_n \subset S_n.
 \]
\end{theorem}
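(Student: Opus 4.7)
The plan is to package the entire statement as the splitting of $\mathbb{R}^{n\times n}$ into the $\pm 1$-eigenspaces of a single algebra involution. Let $P:=\tfrac{1}{n}\,1_n1_n^T$ be the orthogonal projection onto $\operatorname{span}(1_n)$ and $Q:=I_n-P$ the one onto $\{1_n\}^\bot$; these are symmetric idempotents with $P+Q=I_n$ and $PQ=QP=0$, so every $M\in\mathbb{R}^{n\times n}$ decomposes as $M=(P+Q)M(P+Q)=PMP+PMQ+QMP+QMQ$.

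The step where the actual work lies is to translate the definitions of $S_n$ and $V_n$ into conditions on these four blocks. The constant row- and column-sum conditions defining $S_n$ amount to $M1_n\in\operatorname{span}(1_n)$ and $1_n^TM\in\operatorname{span}(1_n^T)$, which are equivalent to $QMP=0=PMQ$. The characterisation \eqref{eq:vprop} of $V_n$, together with the identity $PMP=(1_n^TM1_n/n^2)\,1_n1_n^T$, says exactly that $QMQ=0=PMP$.

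I would then set $R:=P-Q$, so $R^2=P+Q=I_n$, and define $\sigma\colon\mathbb{R}^{n\times n}\to\mathbb{R}^{n\times n}$ by $\sigma(M):=RMR$. From $R^2=I_n$ one reads off $\sigma^2=\operatorname{id}$ and $\sigma(A)\sigma(B)=RAR\cdot RBR=RA\,R^2\,BR=R(AB)R=\sigma(AB)$, so $\sigma$ is an algebra automorphism of order two. Expanding
\[
\sigma(M)=PMP-PMQ-QMP+QMQ
\]
and comparing with $M=PMP+PMQ+QMP+QMQ$, the block characterisations above yield $S_n=\ker(\sigma-\operatorname{id})$ and $V_n=\ker(\sigma+\operatorname{id})$.

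With these identifications the theorem is automatic. The formula $M=\tfrac12(M+\sigma(M))+\tfrac12(M-\sigma(M))$ writes every $M$ uniquely as an $S_n$-part plus a $V_n$-part, giving $\mathbb{R}^{n\times n}=S_n\oplus V_n$. For the multiplicative statements, if $\sigma(A)=\epsilon A$ and $\sigma(B)=\delta B$ with $\epsilon,\delta\in\{\pm 1\}$, then $\sigma(AB)=\sigma(A)\sigma(B)=\epsilon\delta\,AB$, so $AB$ lies in the $(\epsilon\delta)$-eigenspace. The four sign combinations $(+,+)$, $(+,-)$, $(-,+)$, $(-,-)$ produce exactly the four stated inclusions $S_nS_n\subset S_n$, $S_nV_n\subset V_n$, $V_nS_n\subset V_n$, $V_nV_n\subset S_n$, and since $I_n\in S_n$ (weight $1/n$), $S_n$ is in particular a subalgebra.
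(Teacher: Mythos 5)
Your proof is correct. Note that the paper itself does not prove Theorem~\ref{thm:sv}: it is quoted from the reference \cite{slh2} (Thm.~2.5(a)), where the direct sum and the four product inclusions are established from the bilinear-form characterisations of $S_n$ and $V_n$ essentially case by case. Your argument reorganises this into a single mechanism: the block characterisations $S_n=\{M: PMQ=QMP=0\}$ and $V_n=\{M: PMP=QMQ=0\}$ (which you correctly derive from the characterisations stated just before the theorem, including the observation that equality of the constant row and column sums is automatic from $1_n^TM1_n$) exhibit $S_n$ and $V_n$ as the $\pm1$-eigenspaces of the involutive algebra automorphism $\sigma(M)=RMR$ with $R=P-Q$. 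The direct sum then falls out of $M=\tfrac12(M+\sigma(M))+\tfrac12(M-\sigma(M))$, and all four inclusions follow from the one line $\sigma(AB)=\sigma(A)\sigma(B)$, which is exactly the statement that $\sigma$ induces a $\mathbb{Z}_2$-grading. This buys uniformity and makes the ``superalgebra'' structure conceptually transparent, at the cost of one small step you state but do not spell out, namely that $PMQ+QMP=0$ forces $PMQ=0=QMP$ separately (immediate from $P(PMQ+QMP)Q=PMQ$, using $P^2=P$, $Q^2=Q$, $PQ=0$). Since $R$ is symmetric, your involution is also self-adjoint for the Frobenius inner product, so this viewpoint gives Corollary~\ref{cor:21} for free as well.
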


We show below in Corollary \ref{cor:21} that this decomposition of the
matrix algebra is orthogonal.

We begin by showing in Theorem \ref{thm:21} that every element $M$ of $V_n$
can be written in the form $M=a1_n^T+1_n b^T$, before deriving a formula
for obtaining these vectors $a$ and $b$ in Theorem \ref{thm:215}.

\begin{theorem}
\label{thm:21}
A matrix $M \in {\mathbb R}^{n \times n}$ is an element of $V_n$ if and only if there exist vectors
$a, b \in \{1_n\}^\bot$ such that $M = a 1_n^T + 1_n b^T$.
\end{theorem}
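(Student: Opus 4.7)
The plan is to prove both implications separately, working from the explicit defining conditions for $V_n$. The ``if'' direction is a direct verification: substituting $M = a1_n^T + 1_n b^T$ with $a,b \in \{1_n\}^\bot$ into the characterisation (\ref{eq:vprop}) gives $u^T M v = (u^T a)(1_n^T v) + (u^T 1_n)(b^T v) = 0$ for all $u,v \in \{1_n\}^\bot$, and $1_n^T M 1_n = n(1_n^T a) + n(1_n^T b) = 0$, so $M \in V_n$.

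For the ``only if'' direction, my approach is to exploit the vertex cross sum property from the original definition. Fixing $k = l = 1$ in $m_{i,j} + m_{k,l} = m_{i,l} + m_{k,j}$ immediately yields $m_{i,j} = m_{i,1} + m_{1,j} - m_{1,1}$, so setting $\tilde a_i := m_{i,1}$ and $\tilde b_j := m_{1,j} - m_{1,1}$ gives a rank-at-most-two decomposition $M = \tilde a 1_n^T + 1_n \tilde b^T$. This alone uses only the cross sum property, not the zero-sum condition.

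The rank-one decomposition has a one-parameter ambiguity: for any $c \in \mathbb{R}$, replacing $(\tilde a, \tilde b)$ by $(\tilde a + c 1_n, \tilde b - c 1_n)$ leaves the product unchanged. Choosing $c := -\tfrac{1}{n} 1_n^T \tilde a$ produces $a := \tilde a + c 1_n \in \{1_n\}^\bot$. The key step is then to show that the second vector $b := \tilde b - c 1_n$ also lies in $\{1_n\}^\bot$, and this is where the zero-sum condition $1_n^T M 1_n = 0$ is used: it translates to $n(1_n^T \tilde a) + n(1_n^T \tilde b) = 0$, so $1_n^T \tilde b = -1_n^T \tilde a = nc$, giving $1_n^T b = 1_n^T \tilde b - nc = 0$.

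The only conceptual obstacle is identifying why the rank-one decomposition can be normalised to satisfy both orthogonality conditions simultaneously; this is resolved by the observation that the decomposition depends on a single free parameter, while the two conditions $1_n^T a = 0$ and $1_n^T b = 0$ are related through the single scalar identity $1_n^T M 1_n = 0$. Everything else is routine algebra.
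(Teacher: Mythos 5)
Your proof is correct, but your ``only if'' direction takes a genuinely different route from the paper's. The paper works entirely from the characterisation in (\ref{eq:vprop}): for $M\in V_n$ and $v\in\{1_n\}^\bot$ one has $Mv\in\{1_n\}^{\bot\bot}=\mathbb{R}\,1_n$, so $Mv=1_n f(v)$ for a linear form $f$, which by the Riesz representation theorem gives the vector $b$; the vector $a$ is then read off as $\frac{1}{n}M1_n$, with $1_n^TM1_n=0$ supplying $1_n^Ta=0$. You instead work from the entrywise definition: fixing $k=l=1$ in the vertex cross sum identity gives $m_{i,j}=m_{i,1}+m_{1,j}-m_{1,1}$, hence $M=\tilde a 1_n^T+1_n\tilde b^T$ outright, and you then use the one-parameter gauge freedom $(\tilde a,\tilde b)\mapsto(\tilde a+c1_n,\tilde b-c1_n)$ together with the zero-sum condition to land both vectors in $\{1_n\}^\bot$. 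Your argument is more elementary and cleanly separates the roles of the two defining conditions (cross sum property gives the rank-$\le 2$ form; zero total sum allows the simultaneous normalisation), whereas the paper's argument has the advantage of producing the canonical representatives $a=\frac{1}{n}M1_n$ and $b=\frac{1}{n}M^T1_n$ directly, which feed into the explicit projection formulas of Theorem~\ref{thm:215}. One cosmetic point: you refer at one stage to a ``rank-one decomposition'' where you mean the decomposition into a sum of two rank-one terms; this does not affect the argument.
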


\begin{proof}
Let $M \in V_n.$ Then for any $v \in \{1_n\}^\bot,$ we have
$M v \in \{1_n\}^{\bot \bot} = {\mathbb R}\,1_n;$ hence there is a linear form
$f : \{1_n\}^\bot \rightarrow{\mathbb R}$ such that $M v = 1_n\,f(v)$
$(v \in \{1_n\}^\bot).$
By the Riesz Representation Theorem, there is a vector $b \in \{1_n\}^\bot$ such that
$f(v) = b^T v$ $(v \in \{1_n\}^\bot),$ and consequently $M v = 1_n b^T v$ $(v \in \{1, n\}^\bot).$
Now every $x \in {\mathbb R}^n$ can be written in the form $x = \alpha 1_n + v$ with suitable
$v \in \{1, n\}^\bot$ and $\alpha \in {\mathbb R},$ so
\begin{align*}
 M x &= \alpha M 1_n + M v = \alpha b 1_n^T 1_n + 1_n b^T v = (a 1_n^T + 1_n b^T)(\alpha 1_n + v)
\\
 &= (a 1_n^T + 1_n b^T) x,
\end{align*}
where $a := \frac{1}{n}\,M 1_n,$ bearing in mind that $1_n^T 1_n = n.$ It
follows from the last equation in (\ref{eq:vprop}) that $1_n^T a = 0.$
Conversely, for any $a, b \in \{1_n\}^\bot$ the matrix
$a 1_n^T + 1_n b^T \in V_n$.
\end{proof}

Clearly $M\in V_n$ is symmetric if and only if $a=b$ in the above
representation.  Theorem \ref{thm:21} shows that $\dim V_n = 2n - 2$, so
$\dim S_n = n^2 - 2n + 2$ (see also \cite[p.~14]{slh2}), and that the rank
of elements of $V_n$ cannot exceed 2.
Moreover, this theorem makes the spectral decomposition of any matrix $M\in V_n$
very transparent. Indeed, the range of $M$ is spanned by the orthogonal vectors $a$ and $1_n,$
so any eigenvector for non-zero eigenvalue must be of the form $u = \alpha a + \beta 1_n$
with numbers $\alpha, \beta \in {\mathbb C}$.
Then, bearing in mind that $1_n^T a = 0$ and $b^T 1_n = 0$, and denoting the eigenvalue by $\lambda$, we find
\begin{align*}
 \lambda u = M u &= (a 1_n^T + 1_n b^T) (\alpha a + \beta 1_n)
 = n \beta a + \alpha b^T a 1_n,
\end{align*}
giving $n \beta = \lambda \alpha$ and $b^T a\,\alpha = \lambda \beta$.
Hence any nonzero eigenvalue
of $M$ is an eigenvalue of the $2 \times 2$ matrix
$\left (\begin{array}{cc}
0 & n \\
 b^T a & 0
\end{array}\right )$
and vice versa.
This gives the characteristic polynomial for
$M,$
\[
 \chi_M(\lambda) = \lambda^{n-2} (\lambda^2 - n b^T a)
\]
and furthermore the eigenvectors for
$M$ as $v_\pm = \sqrt n a \pm \sqrt{b^T a} 1_n$ for eigenvalues
$\lambda_\pm = \pm \sqrt{n b^T a}$ if $b^T a > 0,$
$v_\pm = \sqrt n a \pm i \sqrt{- b^T a} 1_n$ for eigenvalues
$\lambda_\pm = \pm i \sqrt{-n b^T a}$ if $b^T a < 0$
and the single eigenvector $a$ for eigenvalue 0 if $b^T a = 0$
(and $a \neq 0$; otherwise any non-null vector orthogonal to $b$ will do).

Finally, Theorem \ref{thm:21} easily yields the orthogonality with respect to the Frobenius inner product of the direct sum decomposition
of the space of $n\times n$ matrices into $S_n$ and $V_n$.
The Frobenius inner product of two matrices
$A, B\in {\mathbb R}^{n \times n}$ is defined as
$\left< A, B \right> = \mathop{\rm tr} A^T B$.

\begin{corollary}
\label{cor:21}
The decomposition
${\mathbb R}^{n \times n} = S_n \oplus V_n$
is orthogonal with respect to the Frobenius inner product.
\end{corollary}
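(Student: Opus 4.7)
The plan is to exploit Theorem~\ref{thm:21}, which gives a very concrete parametrisation of $V_n$: every $V \in V_n$ has the form $V = a 1_n^T + 1_n b^T$ with $a, b \in \{1_n\}^\bot$. Since the decomposition $\mathbb{R}^{n \times n} = S_n \oplus V_n$ has already been established, it suffices to verify that $\langle S, V \rangle = 0$ for arbitrary $S \in S_n$ and $V \in V_n$.

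First I would write $V = a 1_n^T + 1_n b^T$ and expand the Frobenius inner product by linearity:
\[
 \langle S, V \rangle = \mathop{\rm tr}(S^T V) = \mathop{\rm tr}(S^T a\, 1_n^T) + \mathop{\rm tr}(S^T 1_n\, b^T).
\]
Each term is the trace of a rank-one matrix, so by the cyclic property of the trace the two summands reduce to the scalars $1_n^T S^T a = a^T S 1_n$ and $b^T S^T 1_n = 1_n^T S b$ respectively.

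Now I would invoke the characterisation of $S_n$ recalled in the introduction to Section~\ref{sec.sv-decomp-matr}, namely that $S \in S_n$ iff $1_n^T S u = 0 = u^T S 1_n$ for every $u \in \{1_n\}^\bot$. Applying this with $u = a$ gives $a^T S 1_n = 0$, and with $u = b$ it gives $1_n^T S b = 0$. Thus both contributions vanish and $\langle S, V \rangle = 0$.

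There is no real obstacle here: once Theorem~\ref{thm:21} is in hand, orthogonality reduces to a one-line trace manipulation together with the defining vanishing conditions of $S_n$ evaluated on the vectors $a$ and $b$. The only thing to be slightly careful about is to match the two factors correctly so that the constraint ``$a, b \perp 1_n$'' is used on exactly the right side; but since the constant sum property of $S$ holds symmetrically for row and column sums, either pairing works.
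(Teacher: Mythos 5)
Your proposal is correct and follows essentially the same route as the paper: write $V = a 1_n^T + 1_n b^T$ via Theorem~\ref{thm:21}, expand the trace to the scalars $a^T S 1_n$ and $b^T S^T 1_n$, and observe that both vanish. The only cosmetic difference is that the paper justifies the final vanishing via $S 1_n = S^T 1_n = n w_S 1_n$ together with $a, b \perp 1_n$, whereas you invoke the equivalent characterisation of $S_n$ in terms of $1_n^T S u = 0 = u^T S 1_n$ for $u \in \{1_n\}^\bot$.
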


\begin{proof}
Let $S \in S_n,$ $V \in V_n;$
then by Theorem \ref{thm:21} there are vectors $a, b \in \{1_n\}^\bot$ such that $V = a 1_n^T + 1_n b^T$.
Hence
\begin{align*}
 \left< S, V \right> &= \mathop{\rm tr} S^T V = \mathop{\rm tr} (S^T a 1_n^T + S^T 1_n b^T)
 = 1_n^T S^T a + b^T S^T 1_n
\\
&= a^T S 1_n + b^T S^T 1_n = 0,
\end{align*}
since $S 1_n = S^T 1_n = n w_S 1_n$,
where $w_S$ is the weight of the matrix $S$.
\end{proof}

We can calculate the weight of any matrix in $S_n$ as the mean of all matrix
entries. In fact, it is meaningful to define the linear form
\[
\mathop{\rm wt} : {\mathbb R}^{n\times n} \rightarrow {\mathbb R},
\quad
\mathop{\rm wt} M = \frac 1 {n^2} \sum_{i,j=1}^n m_{i,j}
\]
giving the weight of any $n\times n$ matrix $M = (m_{i,j})_{i,j=1}^n$.
Theorem \ref{thm:21} immediately shows that $\mathop{\rm wt} M = 0$ for all $M\in V_n$.

Any $M\in S_n$ can be uniquely decomposed into
\[
 M = M_0 + (\mathop{\rm wt} S)\,\emx_n,
\]
where $M_0 \in S_n$ has weight 0 and
$\emx_n = 1_n 1_n^T \in S_n$ is the $n\times n$ matrix with all entries equal to 1.
In conjunction with the previous observations, this gives rise to the following unique
decomposition of general $n\times n$ matrices, including an explicit formula for the calculation of the parts.

\begin{theorem}
\label{thm:215}
Let $M=(m_{i,j})\in\mathbb{R}^{n\times n}$. Then there is a unique decomposition
\[
 M = M_V + M_0 + (\mathop{\rm wt} M)\,\emx_n,
\]
where $M_0 \in S_n$ with weight 0 and $M_V=a1_n^T+1_nb^T \in V_n$, and the
entries of the vectors $a$ and $b$ are given by
\begin{align*}
a_i &= \frac{1}{n} \sum_{j=1}^n m_{i,j} - \mathop{\rm wt} M \quad(i \in\{1, \dots, n\}),\\
b_j &= \frac{1}{n} \sum_{i=1}^n m_{i,j} - \mathop{\rm wt} M \quad(j \in\{1, \dots, n\}).
\end{align*}
In particular, if $M$ is an integer matrix then the vectors $n^2a$ and
$n^2b$ have integer entries.
\end{theorem}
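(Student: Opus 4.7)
The plan is to obtain existence and uniqueness of the decomposition from the direct-sum structure established earlier, and then extract the formulas for $a$ and $b$ by taking row and column sums.

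First I would apply Theorem~\ref{thm:sv} to write $M = M_V + M_S$ uniquely with $M_V \in V_n$ and $M_S \in S_n$. Since $\mathop{\rm wt} M_V = 0$ (as observed just before the theorem statement, using Theorem~\ref{thm:21}), linearity of $\mathop{\rm wt}$ gives $\mathop{\rm wt} M_S = \mathop{\rm wt} M$. Writing $M_S = M_0 + (\mathop{\rm wt} M_S)\,\emx_n$ with $M_0 := M_S - (\mathop{\rm wt} M)\,\emx_n \in S_n$ then produces a weight-zero summand, since $\mathop{\rm wt} \emx_n = 1$. This yields the claimed three-term decomposition and its uniqueness (any other such decomposition would contradict either $S_n \cap V_n = \{0\}$ or the uniqueness of the weight split in $S_n$).

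Next I would derive the formulas for $a$ and $b$. By Theorem~\ref{thm:21}, $M_V = a 1_n^T + 1_n b^T$ with $a,b \in \{1_n\}^\bot$, so $(M_V)_{i,j} = a_i + b_j$. Because $M_0 \in S_n$ has weight zero, each row sum and each column sum of $M_0$ vanishes, and each row sum and column sum of $(\mathop{\rm wt} M)\,\emx_n$ equals $n\,\mathop{\rm wt} M$. Summing the identity $M = M_V + M_0 + (\mathop{\rm wt} M)\,\emx_n$ over $j$ with $i$ fixed, and using $\sum_j b_j = 0$, I obtain
\[
\sum_{j=1}^n m_{i,j} = n a_i + n\,\mathop{\rm wt} M,
\]
which rearranges to the stated formula for $a_i$. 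The formula for $b_j$ follows analogously from summing over $i$ with $j$ fixed, using $\sum_i a_i = 0$.

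Finally, for the integrality statement I would note that if $M$ has integer entries then $\sum_{i,j} m_{i,j} \in \mathbb{Z}$, hence $n^2\,\mathop{\rm wt} M \in \mathbb{Z}$. Then
\[
n^2 a_i = n \sum_{j=1}^n m_{i,j} - n^2\,\mathop{\rm wt} M \in \mathbb{Z},
\]
and similarly $n^2 b_j \in \mathbb{Z}$. There is no real obstacle here; the only mildly delicate point is checking that the formulas produce vectors in $\{1_n\}^\bot$, but this is automatic since $\sum_i a_i = \frac{1}{n}\sum_{i,j} m_{i,j} - n\,\mathop{\rm wt} M = 0$, and similarly for $b$, so consistency with Theorem~\ref{thm:21} is preserved.
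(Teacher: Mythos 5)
Your proof is correct, but it takes a genuinely different route from the paper. The paper computes $M_V$ by explicit orthogonal projection: it builds an orthonormal basis $\{\frac{1}{\sqrt n}\varepsilon_j 1_n^T, \frac{1}{\sqrt n}1_n\varepsilon_j^T\}$ of $V_n$ with respect to the Frobenius inner product (where $\{\varepsilon_j\}$ is an orthonormal basis of $\{1_n\}^\bot$), projects $M$ onto it, and simplifies using the completeness relation $\sum_{j=1}^{n-1}(\varepsilon_j^T v)\varepsilon_j = v - \frac{1}{n}(1_n^T v)1_n$ to land on the formulas $a = \frac{1}{n}\bigl(M1_n - \frac{1}{n}(1_n^T M 1_n)1_n\bigr)$ and its transpose analogue. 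You instead take the existence and uniqueness of $M = M_V + M_S$ from Theorem~\ref{thm:sv} as given, peel off the weight via $\mathop{\rm wt} M_S = \mathop{\rm wt} M$, and then read off $a_i$ and $b_j$ by summing the entrywise identity $m_{i,j} = a_i + b_j + (M_0)_{i,j} + \mathop{\rm wt} M$ over a row or column, using $b\in\{1_n\}^\bot$ and the vanishing row/column sums of $M_0$. Both arguments are sound and both ultimately rest on the direct sum $\mathbb{R}^{n\times n} = S_n\oplus V_n$; yours is more elementary and arguably more transparent about why the formulas hold (they are forced by row/column sum bookkeeping), while the paper's projection argument has the side benefit of exhibiting an explicit Frobenius-orthonormal basis of $V_n$, which ties in with the orthogonality statement of Corollary~\ref{cor:21}. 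Your closing check that the resulting $a$ and $b$ do lie in $\{1_n\}^\bot$ is a worthwhile consistency remark that the paper leaves implicit.
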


\begin{proof}
Let $\{\varepsilon_j : j \in \{1, \dots, n-1\}\}$ be an orthonormal basis of $\{1_n\}^\bot.$ Then
\[
\left\{\frac{1}{\sqrt n} \varepsilon_j 1_n^T, \frac{1}{\sqrt n} 1_n \varepsilon_j^T : j \in \{1, \dots, n-1\}\right\}
\]
is an orthonormal basis, with respect to the Frobenius inner product, of the ($2n-2$-dimensional) space $V_n.$ Indeed, for $j, k \in \{1, \dots,~n~-~1\}$ we have
\[
 \left<\varepsilon_j 1_n^T, 1_n \varepsilon_k^T \right> = \mathop{\rm tr} 1_n \varepsilon_j^T 1_n \varepsilon_k^T = 0
\]
since $\varepsilon_j^T 1_n = 0,$ and
\[
 \left<\varepsilon_j 1_n^T, \varepsilon_k 1_n^T \right> = \mathop{\rm tr} 1_n \varepsilon_j^T \varepsilon_k 1_n^T = \delta_{j\, k}\,\mathop{\rm tr} 1_n 1_n^T  = n \delta_{j\, k}
\]
and analogously for
$\left<1_n \varepsilon_j^T, 1_n \varepsilon_k^T \right>.$

These observations enable us to find $M_V$, the $V_n$ part of $M \in {\mathbb R}^{n \times n}$
by orthogonal projection using the above orthonormal basis. We have
\begin{align*}
M_V &= \sum_{j=1}^{n-1} \left(\left<\frac{1}{\sqrt n} \varepsilon_j 1_n^T, M \right> \frac{1}{\sqrt n} \varepsilon_j 1_n^T + \left<\frac{1}{\sqrt n} 1_n \varepsilon_j^T, M \right> \frac{1}{\sqrt n} 1_n \varepsilon_j^T \right)
\\
&= \frac{1}{n} \sum_{j=1}^{n-1} \left(\mathop{\rm tr}(1_n \varepsilon_j^T M) \varepsilon_j 1_n^T + \mathop{\rm tr}(\varepsilon_j 1_n^T M) 1_n \varepsilon_j^T \right)
\\
&= \frac{1}{n} \sum_{j=1}^{n-1} (\varepsilon_j^T M 1_n) \varepsilon_j 1_n^T + \frac{1}{n} \sum_{j=1}^{n-1} 1_n ((1_n^T M \varepsilon_j) \varepsilon_j)^T  = a_M 1_n^T + 1_m b_M^T
\end{align*}
with
\[
 a_M = \frac{1}{n} \sum_{j=1}^{n-1} (\varepsilon_j^T M 1_n) \varepsilon_j, \qquad
 b_M = \frac{1}{n} \sum_{j=1}^{n-1} (1_n^T M \varepsilon_j) \varepsilon_j.
\]
Observing that
$\{\varepsilon_j : j \in \{1, \dots, n-1\}\} \cup \{\frac{1}{\sqrt n} 1_n\}$
is an orthonormal basis of ${\mathbb R}^n,$ we find
\[
 \sum_{j=1}^{n-1} (\varepsilon_j^T v) \varepsilon_j = v - \frac{1}{n} (1_n^T v) 1_n \qquad (v \in {\mathbb R}^n).
\]
Hence
\[
 a_M = \frac{1}{n} \left( M 1_n - \frac{1}{n} n (1_n^T M 1_n) 1_n \right),
\]
and
\[
b_M = \frac{1}{n} n \left( M^T 1_n - \frac{1}{n} n (1_n^T M^T 1_n) 1_n \right). \qedhere
\]
\end{proof}

\begin{example}
For the Wilson matrix $W$ this decomposition takes the form
\begin{equation}
16W = 1_4
\left (\begin{array}{c}
-27\\
9\\
13\\
5
\end{array}\right )^T
+
\left (\begin{array}{c}
-27\\
9\\
13\\
5
\end{array}\right )
1_4^T
 +\left(
\begin{array}{cccc}
 15 & 11 & -9 & -17 \\
 11 & 23 & -13 & -21 \\
 -9 & -13 & 15 & 7 \\
 -17 & -21 & 7 & 31 \\
\end{array}
\right)
+119\emx_4
\label{eq:sa2}
\end{equation}
and for the integer matrix factor $Z$ of eq.\ (\ref{eq:wilroot})
\begin{equation}
16Z =
1_4
\left (\begin{array}{c}
17\\
1\\
-7\\
-11
\end{array}\right )^T
+
\left (\begin{array}{c}
-7\\
-3\\
5\\
5
\end{array}\right )
1_4^T
 +
\left(
\begin{array}{cccc}
 3 & 15 & -9 & -9 \\
 3 & -1 & 7 & -9 \\
 -5 & -9 & -1 & 15 \\
 -1 & -5 & 3 & 3 \\
\end{array}
\right)+19\emx_4.
\label{eq:sa3}
\end{equation}
The last terms on the right-hand side of these formulae correctly yield
$\mathop{\rm wt} W = \frac{119}{16}$ and
$\mathop{\rm wt} Z = \frac{19}{16}$.
\end{example}

\section{\large Integer factorisation of matrices and the quadratic form obstruction}
\label{sec:quad}

On the basis of the matrix decomposition established in the preceding
section, we now derive the quadratic equation arising from
balancing the weights in a matrix factorisation.

\begin{theorem}
\label{lem:22}
Let $M, N\in\mathbb{R}^{n\times n}$, and let $M = M_V + M_0 + (\mathop{\rm wt} M)\,\emx_n$
and $N = N_V + N_0 + (\mathop{\rm wt} N)\,\emx_n$ be their decompositions as in Theorem
\ref{thm:215}, with $N_V = a 1_n^T + 1_n b^T$.

(i) If $M=N^TN$ then
\begin{equation}
 \mathop{\rm wt} M = |a|^2 + n\,(\mathop{\rm wt} N)^2,
\label{eq:wtqf1}
\end{equation}
$M_V = y 1_n^T + 1_n y^T$ with $y = N_0^T a + n\,(\mathop{\rm wt} N) b$, and
$M_0 = N_0^T N_0+n\,b b^T$.

(ii) If $M = N^2$ then
\begin{equation}
\wt M = b^T a + n\,(\wt N)^2,
\label{eq:wtqf2}
\end{equation}
$M_V = y 1_n^T+1_n z^T$ with $y = N_0^T a + n (\wt N) a$ and $z = N_0^T b + (\wt N) b$, and
$M_0 = N_0^2 + n\,a b^T$.
\end{theorem}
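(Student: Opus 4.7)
My plan is to substitute the $\mathrm{S}+\mathrm{V}$ decomposition of $N$ (and, in case (i), of $N^T$) into the product and to separate the $S_n$ and $V_n$ contributions using the superalgebra structure of Theorem~\ref{thm:sv}. Writing $N_S := N_0 + (\wt N)\,\emx_n \in S_n$ so that $N = N_V + N_S$, in case (i) I expand
\[
 N^T N = N_V^T N_V + N_V^T N_S + N_S^T N_V + N_S^T N_S.
\]
Transposition preserves both $S_n$ and $V_n$, so by Theorem~\ref{thm:sv} the outer two summands lie in $S_n$ and the middle two in $V_n$. The uniqueness of the decomposition from Theorem~\ref{thm:215} then reads off $M_V = N_V^T N_S + N_S^T N_V$ and $M_0 + (\wt M)\,\emx_n = N_V^T N_V + N_S^T N_S$ without any further argument.

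I then evaluate the four terms explicitly, using $\emx_n = 1_n 1_n^T$, $1_n^T 1_n = n$, the orthogonalities $a^T 1_n = b^T 1_n = 0$, and the fact that $N_0 \in S_n$ has weight zero, so $N_0 1_n = 0 = N_0^T 1_n$. These identities collapse $N_V^T N_V$ to $|a|^2\,\emx_n + n\, b b^T$ and $N_S^T N_S$ to $N_0^T N_0 + n(\wt N)^2\,\emx_n$, while the cross terms $N_V^T N_S$ and $N_S^T N_V$ reduce to products in which $1_n$ appears on exactly one side. Reading off the coefficient of $\emx_n$ yields $\wt M = |a|^2 + n(\wt N)^2$; pairing the surviving $1_n$-rows and $1_n$-columns gives $M_V = y\,1_n^T + 1_n y^T$ with the stated $y$; what remains in $S_n$ with weight zero is $M_0 = N_0^T N_0 + n\, b b^T$. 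I would sanity-check membership by verifying $M_0\,1_n = 0$ directly from $b^T 1_n = 0$ and $N_0 1_n = 0$.

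Case (ii) follows the same pattern after the expansion $N^2 = N_V^2 + N_V N_S + N_S N_V + N_S^2$. The genuinely new ingredient is $N_V^2 = (a 1_n^T + 1_n b^T)^2$, which by the same orthogonality identities contracts to $n\, a b^T + (b^T a)\,\emx_n$ — simultaneously a rank-one contribution to $M_0$ and a scalar contribution to $\wt M$. The $V_n$ part once more comes from $N_V N_S + N_S N_V$, and the claimed formulas for $y$ and $z$ fall out once one tracks carefully which side of each rank-one term carries $1_n$.

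The main obstacle is disciplined bookkeeping: the nine-term expansion in each case is mechanical, but it is easy to confuse $N_0 a$ with $N_0^T a$ or to mislay factors of $n$ arising from $1_n^T 1_n = n$. What makes the argument conceptual rather than brute-force is Theorem~\ref{thm:sv} together with the uniqueness half of Theorem~\ref{thm:215}: these tell us \emph{a priori} that the sixteen monomials in the expansion partition cleanly into $S_n$ and $V_n$ contributions, so the remaining task is only to write down explicit representatives for each piece.
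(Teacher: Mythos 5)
Your proposal is correct and follows essentially the same route as the paper: expand the product of the decomposed factors, use the superalgebra relations of Theorem~\ref{thm:sv} to sort terms into $S_n$ and $V_n$, evaluate the rank-one products via $1_n^T1_n=n$, $a^T1_n=b^T1_n=0$ and $N_01_n=N_0^T1_n=0$, and invoke the uniqueness in Theorem~\ref{thm:215}; grouping $N_0+(\wt N)\,\emx_n$ into a single $N_S$ first is only a cosmetic difference. One remark: the careful bookkeeping you promise for part~(ii) actually yields $y=N_0a+n(\wt N)a$ and $z=N_0^Tb+n(\wt N)b$ (consistent with the paper's own intermediate identities $N_0N_V=N_0a1_n^T$ and $\emx_nN_V=n1_nb^T$), so the transpose on $N_0$ in $y$ and the missing factor of $n$ in $z$ in the theorem statement appear to be typographical rather than a flaw in your argument.
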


\begin{proof}
(i)
Since $N_0\emx_n=N^T_0\emx_n=\omx_n$, we have
\begin{align*}
N^T N = &(N_V^T N_V + N_0^T N_0)
\\
&+ (N_V^T N_0 + N_0^T N_V + (\wt N) N_V^T\emx_n + (\wt N)\emx_n N_V) + n (\wt N)^2\emx_n,
\end{align*}
where, by the superalgebra property, the matrices in the first bracket lie in $S_n$, the matrices in the second bracket in $V_n$. Writing $N_V = a 1_n^T + 1_n b^T$, we find
$N_V^TN_V=a^T a\emx_n + n b b^T$, $N_V^TN_0=1_na^TN_0$, $N_0^TN_V=N_0^Ta1_n^T$,
$N_V^T\emx_n=n b 1_n^T$ and $\emx_nN_V = n1_n b^T$.
Hence the uniqueness of the decomposition of $M$ by Theorem \ref{thm:215} gives the claimed
identities.

(ii)
Similarly, we find
\begin{align*}
N^2 = &(N_V^2 + N_0^2) + (N_V N_0 + N_0 N_V + (\wt N) N_V \emx_n + (\wt N)\emx_n N_V)
\\
&+ n (\wt N)^2\emx_n,
\end{align*}
and, setting $N_V = a 1_n^T + 1_n b^T$, note that
$N_V^2=b^Ta\emx_n+nab^T$, $N_V N_0 = 1_n b^T N_0$, $N_0 N_V = N_0 a 1_n^T$,
$N_V\emx_n = n a 1_n^T$ and $ \emx_n N_V = n 1_n b^T$. This gives the claimed identities by uniqueness of decomposition.
\end{proof}

The above theorem gives the following quadratic form obstructions to the
factorisation of an integer matrix $M$ into either $M = N^T N$ or $M = N^2$
with an integer matrix $N$.

\begin{corollary}
\label{thm:23}
Given a matrix $M\in\mathbb{Z}^{n\times n}$, it is necessary for the
existence of a factorisation $M = N^T N$ with $N\in\mathbb{Z}^{n\times n}$
that $n^2 \wt N\in\mathbb{Z}$ and that the vector components
$n^2 a_j, n^2 b_j\in\mathbb{Z}$ $(j \in \{1, \dots, n\})$, where
$N = a1_n^T + 1_nb^T + N_0+w_N\emx_n$ is the decomposition of $N$ as in
Theorem \ref{thm:215}, form a solution of the quadratic equation
\begin{align*}
 n^4\wt M &= n\, (n^2 \wt N)^2 + \sum_{j=1}^n (n^2 a_j)^2.
\end{align*}
Similarly, for the decomposition $M = N^2$, the quadratic equation is
\begin{align*}
 n^4\wt M &= n\, (n^2 \wt N)^2 + \sum_{j=1}^n (n^2 a_j)(n^2 b_j).
\end{align*}
\end{corollary}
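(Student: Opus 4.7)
The plan is to observe that this corollary is essentially a direct integer-arithmetic reformulation of Theorem \ref{lem:22}, so the work reduces to two tasks: verifying that the scaled quantities $n^2 \wt N$, $n^2 a_j$, $n^2 b_j$ are integers when $N$ has integer entries, and then rescaling equations (\ref{eq:wtqf1}) and (\ref{eq:wtqf2}) by a common power of $n$.

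First I would address the integrality. From the definition $\wt N = \tfrac{1}{n^2}\sum_{i,j} n_{i,j}$, it is immediate that $n^2 \wt N = \sum_{i,j} n_{i,j} \in \mathbb{Z}$ whenever $N \in \mathbb{Z}^{n\times n}$. Using the explicit formulas of Theorem \ref{thm:215} for the vectors $a$ and $b$, namely $a_i = \tfrac{1}{n}\sum_j n_{i,j} - \wt N$, one computes $n^2 a_i = n \sum_j n_{i,j} - \sum_{i,j} n_{i,j} \in \mathbb{Z}$, and analogously $n^2 b_j \in \mathbb{Z}$. This confirms the first sentence of the corollary. Similarly, if $M \in \mathbb{Z}^{n\times n}$ then $n^4 \wt M = n^2 \sum_{i,j} m_{i,j} \in \mathbb{Z}$.

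Next I would derive the quadratic equations. For the factorisation $M = N^T N$, I take identity (\ref{eq:wtqf1}), $\wt M = |a|^2 + n (\wt N)^2$, and multiply both sides by $n^4$, which gives
\[
 n^4 \wt M \;=\; n^4 \sum_{j=1}^n a_j^2 + n^5 (\wt N)^2 \;=\; \sum_{j=1}^n (n^2 a_j)^2 + n (n^2 \wt N)^2,
\]
which is the stated equation; every term is now an integer by the previous step. For the factorisation $M = N^2$, I apply the same rescaling to (\ref{eq:wtqf2}), $\wt M = b^T a + n (\wt N)^2$, obtaining
\[
 n^4 \wt M \;=\; n^4 \sum_{j=1}^n a_j b_j + n (n^2 \wt N)^2 \;=\; \sum_{j=1}^n (n^2 a_j)(n^2 b_j) + n (n^2 \wt N)^2.
\]

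There is no real obstacle here: the content lies entirely in Theorem \ref{lem:22}, and the present corollary is a bookkeeping step that clears denominators so that the identity can be read as a Diophantine condition. The only minor point to be careful about is to apply the correct scaling factor ($n^4$) so that all quantities on both sides become integers simultaneously, rather than a smaller power that would still leave rational terms.
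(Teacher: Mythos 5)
Your proposal is correct and matches the paper's treatment: the corollary is presented there as an immediate consequence of Theorem \ref{lem:22}, obtained by exactly the rescaling by $n^4$ you describe, with integrality of $n^2\wt N$, $n^2 a_j$, $n^2 b_j$ read off from the explicit formulas of Theorem \ref{thm:215}. Nothing is missing.
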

Suppose we are given a symmetric integer matrix
$M\in\mathbb{Z}^{n\times n}$ and have found a solution
$(w_N, a_1, \dots, a_n) \in \frac 1 {n^2} \mathbb{Z}^{n+1}$ of the
quadratic equation associated with the factorisation $M = N^T N$,
\[
 n^4 \wt M = n^5 w_N^2 + n^4 \sum_{j=1}^n a_j^2.
\]
To complete the factorisation, we need to identify a vector $b$ and a
matrix $N_0$ satisfying the equations in
Theorem \ref{lem:22} (i).
Using the decomposition $M = y 1_n^T + 1_n y^T + M_0 + (\wt M)\,\emx_n$, we have
$b = \frac 1 {n w_N}\,(y - N_0^T a)$, so we only need to find a solution
$N_0$ of the quadratic matrix equation
\be N_0^T (a a^T + n w_N^2\imx_n) N_0 -
N_0^T a y^T - y a^T N_0 = n w_N^2 M_0 - y y^T
\label{eq:qform4}
\ee
or, setting $N_0 = L - \frac 1 {a^T a + n w_N^2}\,a y^T$, the simpler quadratic
\be
 L^T (a a^T + n w_N^2\imx_n) L = n w_N^2 \Bigl(M_0 + \frac 1 {a^T a + n w_N^2} y y^T\Bigr).
\label{eq:qform4a}
\ee
The right-hand side and the middle factor on the left-hand side of these
equations are determined in terms of the $\mathrm{S}+\mathrm{V}$ decomposition of the given
matrix $M$ and the particular solution of the factorisation quadratic form
considered.  Although determining $N_0$ or $L$ from these equations is a
factorisation problem of similar type to the original equation $M = N^T N$,
we found that their solution was computationally more effective.

\begin{rmk}
We remark in passing that the matrix $a a^T + n w_N^2$ appearing as a middle
factor on the left-hand side of eq.\ (\ref{eq:qform4a}) is symmetric positive
definite and can be written as $A^T A$, where
$A = \sqrt{n w_N^2} + \frac{\sqrt{a^Ta + n w_N^2} - \sqrt{n w_N^2}}{a^T a}\, a
a^T$ and
$A^{-1} = \frac 1 {\sqrt{n w_N^2}} + \frac 1 {a^T a}
\left(\frac 1 {\sqrt{a^T a + n w_N)N^2}} - \frac 1 {\sqrt{n w_N^2}} \right) a a^T$.

Thus
$N_0 = A^{-1} L - \frac 1 {a^T a + n w_N^2}\,a y^T$, where
\[
 L^T L = n w_N^2 \Bigl(M_0 + \frac 1 {a^T a + n w_N^2} y y^T\Bigr),
\]
but due to the presence of square roots this is unlikely to give a rational
solution $N_0$.
\end{rmk}

\begin{example}
The quadratic equation arising from balancing the weights in the assumed
factorisation of the Wilson matrix $W=Z^TZ$ is
\begin{align*}
\mathop{\rm wt}W = \frac{119}{16} &=a_1^2+a_2^2+a_3^2+a_4^2 +4w_Z^2
\\
&=2(a_1^2+a_2^2+a_3^2+a_1a_2+a_1a_3+a_2a_3) + 4\,w_Z^2,
\end{align*}
as $a_4=-a_1-a_2-a_3$. Multiplying the equation by $2^7$ and setting $x_i=16a_i$, $w=16w_Z$, we find that a necessary condition for the integer factorisation of the Wilson matrix is that there are integer solutions to the quadratic equation (\ref{eq:qform1})
\[
2 w^2+x_1^2+x_1 x_2+x_1 x_3+x_2^2+x_2 x_3+x_3^2=952.
\]
Solving this equation for
$w,x_1,x_2,x_3$ in Mathematica 11.0 on a PC with a Intel Core i7 6500CPU,
gave the $1728$ solutions in just under 6 seconds.
Exactly one third ($576$) of these solutions lead to
rational matrix factorisations $W=Z^TZ$ with
$Z\in\frac{1}{16}\mathbb{Z}^{4\times 4}$.

The process of converting solutions into matrix factors, i.e. of finding
suitable vectors $b$ and matrices $N_0$ satisfying the equations of Theorem
\ref{lem:22} (i) as outlined after Corollary \ref{thm:23}, took
considerably longer at $34$ minutes. Our approach involved utilising
\eqref{eq:qform4}, in which the vector $b$ is eliminated and the
right-hand-side completely determined for a given factor weight
$w_N$. Substituting potential solutions for the vector $a$ and weight
$w_N$, thus reduces the general problem of finding the matrix $N_0$ to that
of an $(n-1)\times (n-1)$ unknown matrix. For the Wilson matrix this is a
$9$-dimensional problem, and then for each of the $9$-dimensional solutions
for $N_0$, the vector $b$ can be quickly constructed. Adding together the
elements of the decomposition we then recover the rational matrix factors
$N$.

To a large part, the multiplicity of solutions to the factorisation problem is expected.
Indeed, it is clear that if $M = N^T N$ and $U$ is an orthogonal matrix, then $U N$ is
another solution of the factorisation problem; conversely, if $\det M = 1$ and $N$ and $N'$ are solutions with integer entries, then $N' = U N$, where $U = N' N^{-1}$ is an integer orthogonal matrix. It is also not hard to see that any integer orthogonal matrix is a signed permutation matrix, i.e.\ a matrix which has exactly one non-zero entry, either $1$ or $-1$, in each row and in each column.

It is therefore natural to classify the factorisation matrices (integer or
rational) modulo left multiplication with integer orthogonal matrices. For
the factorisations of Wilson's matrix obtained through the above procedure,
this gives three distinct classes, represented by the matrix $Z$ of
(\ref{eq:wilroot}) and the further two matrices \be
\label{eq:Z}
\renewcommand{\arraystretch}{1.35}
Z'=\left(
\begin{array}{cccc}
 \frac{1}{2} & 1 & 0 & 1 \\
 \frac{3}{2} & 2 & 3 & 3 \\
 \frac{1}{2} & 1 & 0 & 0 \\
 \frac{3}{2} & 2 & 1 & 0 \\
\end{array}
\right),
\quad
Z''=\left(
\begin{array}{@{\mskip2mu}rrrr}
 \frac{3}{2} & 2 & 2 & 2 \\
 \frac{3}{2} & 2 & 2 & 1 \\
 \frac{1}{2} & 1 & 1 & 2 \\
 -\frac{1}{2} & -1 & 1 & 1 \\
\end{array}
\right).
\ee
The three factorisations $W = Z^T Z=Z'^T Z'=Z''^T Z''$ correspond to the solutions $(w,x_1,x_2,x_3) = (19,17,1,-7)$, $(w,x_1,x_2,x_3) = (18,-8,20,-12)$ and $(w,x_1,x_2,x_3) = (19,11,7,-1)$ of eq.\ (\ref{eq:qform1}), respectively. Note that $Z'$ and $Z''$ are not integer matrices, so the equivalence class of $Z$ comprises all integer factorisations of the Wilson matrix.
\end{example}

\section{\large Determinants, Decompositions and Adjugates}\label{sec.determ-decomp-adjug}

In this section we consider how the matrix decomposition of Theorem \ref{thm:215} can be used to obtain expressions for the determinant of a matrix.
We begin by recalling that the {\it adjugate\/} of an $n\times n$ matrix $M$ is defined by
\[
\adj M  =((-1)^{i+j}\det(M_{j,i}))_{i,j=1}^n,
\]
where $M_{j,i}$ denotes the submatrix of $M$ obtained by deleting row $j$ and column $i$;
$\adj M$ is the transpose of the matrix of cofactors of $M$. If $M$ is nonsingular, then the adjugate can be written as
$\adj M  =(\det M) M^{-1}$.

The adjugate of a matrix appears naturally when calculating the determinant of a rank-1 perturbation of a matrix, as shown in the following lemma (cf. \cite{higham15}).

\begin{lemma}
\label{lem:41}
Let $M\in\mathbb{C}^{n\times n}$ and $u,v\in\mathbb{C}^n$. Then
\[
\det\left(M + uv^T\right) = \det\left(M\right) + v^T{\rm adj}\left(M\right)u.
\]
\end{lemma}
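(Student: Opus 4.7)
The plan is to prove this via column multilinearity of the determinant, which gives a direct argument and avoids a separate invertible/singular case split. The key observation is that the $j$th column of $M + uv^T$ equals $M_{*j} + v_j u$, i.e.\ the original $j$th column of $M$ plus a scalar multiple of the fixed vector $u$, so expanding by multilinearity produces a sum of terms each of which has some columns taken from $M$ and the rest all proportional to $u$.

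First I would expand $\det(M + uv^T)$ using multilinearity in the columns, obtaining a sum over all subsets $S \subseteq \{1,\dots,n\}$ of determinants of matrices whose $j$th column equals $M_{*j}$ for $j \notin S$ and $v_j u$ for $j \in S$. Any term with $|S| \geq 2$ has two distinct columns each proportional to $u$, so its determinant vanishes. Only the term $S = \emptyset$, which contributes $\det M$, and the $n$ singleton terms $S = \{j\}$ survive; the latter each equal $v_j \det M^{(j)}$, where $M^{(j)}$ is obtained from $M$ by replacing its $j$th column with $u$.

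Next I would evaluate each $\det M^{(j)}$ by cofactor expansion along the $j$th column, giving $\det M^{(j)} = \sum_{i=1}^n u_i (-1)^{i+j}\det(M_{i,j}) = \sum_{i=1}^n (\adj M)_{j,i}\, u_i$, by the definition of $\adj M$ recalled just above the lemma. Summing these contributions weighted by $v_j$ and adding the $S=\emptyset$ term yields $\det M + \sum_{i,j} v_j (\adj M)_{j,i} u_i = \det M + v^T (\adj M) u$, which is the claimed identity.

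There is no substantive obstacle here; the only step that needs care is bookkeeping the signs when invoking the definition of the adjugate (the transpose of the cofactor matrix), so that the indices $(j,i)$ line up correctly with the matrix product $v^T (\adj M) u$. An alternative route, worth noting, is to treat the invertible case first via the factorisation $\det(M+uv^T) = \det(M)\det(I_n + M^{-1}uv^T)$ together with the standard identity $\det(I_n + xy^T) = 1 + y^T x$ and $\adj M = (\det M) M^{-1}$, and then extend to arbitrary $M$ by observing that both sides of the asserted identity are polynomial in the entries of $M$, $u$, $v$, and that the invertible matrices are dense in $\mathbb{C}^{n\times n}$; but the multilinearity proof is cleaner and yields the result uniformly.
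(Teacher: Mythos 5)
Your proof is correct, but it takes a genuinely different route from the paper. The paper proves the identity first for nonsingular $M$ via the factorisation $\det(M+uv^T)=\det(M)\det(I+M^{-1}uv^T)=\det(M)(1+v^TM^{-1}u)$ together with $\adj M=(\det M)M^{-1}$, and then extends to all of $\mathbb{C}^{n\times n}$ by density of the invertible matrices and continuity of the determinant and adjugate --- exactly the ``alternative route'' you mention at the end. Your argument instead expands $\det(M+uv^T)$ by multilinearity in the columns, kills every term in which two or more columns are proportional to $u$, and evaluates the surviving singleton terms by cofactor expansion; the sign and index bookkeeping against the paper's definition $\adj M=((-1)^{i+j}\det(M_{j,i}))_{i,j}$ works out as you describe, giving $\sum_j v_j\det M^{(j)}=\sum_{i,j}v_j(\adj M)_{j,i}u_i=v^T(\adj M)u$. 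What your approach buys is uniformity and generality: it needs no case split, no topology, and is valid over an arbitrary commutative ring, not just $\mathbb{C}$. What the paper's approach buys is brevity, since it reduces everything to two standard one-line identities at the cost of a density-and-continuity appeal that is specific to fields like $\mathbb{C}$ (or to a Zariski-density or polynomial-identity argument in general).
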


\begin{proof}
Suppose $M$ is a nonsingular $n\times n$ matrix. Then
\begin{align*}
\det(M+uv^{T}) & =\det(M)\det(I+M^{-1}uv^{T})
=\det(M)(1+v^{T}M^{-1}u)
\\
&=\det(M)+v^{T}(\det(M)M^{-1})u.
\end{align*}
This gives the stated identity for regular matrices. The general case follows from the facts that the set of regular matrices is dense in $\mathbb{C}^{n\times n}$ and that the determinant and adjugate are continuous functions of the matrix.
\end{proof}

\begin{corollary}
\label{lem:42}
Let $M_0\in S_n$ with weight 0, and $M=M_0+w_M\emx_n$ with some $w_M\in\mathbb{C}$. Then
\[
\det M =w_M\,1_n^T ({\rm adj} M_0)1_n = n^2 (\wt M) (\wt \adj M_0).
\]
\end{corollary}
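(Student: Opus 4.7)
The plan is to apply Lemma \ref{lem:41} directly, observing that $w_M \emx_n = w_M\,1_n 1_n^T$ is a rank-one perturbation of $M_0$. First I would set $u = w_M 1_n$ and $v = 1_n$ in Lemma \ref{lem:41} to get
\[
\det M = \det(M_0 + w_M 1_n 1_n^T) = \det M_0 + w_M\,1_n^T \adj(M_0)\,1_n.
\]

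The key observation is then that $\det M_0 = 0$. Since $M_0 \in S_n$ has weight $0$, each row sum of $M_0$ equals $n\cdot 0 = 0$, so $M_0 1_n = 0$; hence $1_n$ lies in the kernel of $M_0$ and $M_0$ is singular. This gives the first equality $\det M = w_M\,1_n^T \adj(M_0)\,1_n$ immediately.

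For the second equality, I would unpack the weight on both sides. Since $M_0$ has weight $0$, all of its entries sum to $0$, and therefore $\wt M = \frac{1}{n^2}\sum_{i,j} m_{i,j} = w_M$. Also, by the very definition of the weight functional, $\wt \adj M_0 = \frac{1}{n^2}\sum_{i,j}(\adj M_0)_{i,j} = \frac{1}{n^2}\,1_n^T\adj(M_0)\,1_n$. Multiplying and collecting the $n^2$ factors yields $n^2(\wt M)(\wt \adj M_0) = w_M\,1_n^T \adj(M_0)\,1_n$, which matches the right-hand side of the first equality.

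There is no substantial obstacle: once one recognises $\emx_n$ as $1_n 1_n^T$ and recalls that weight-$0$ type-S matrices annihilate $1_n$, the result is just Lemma \ref{lem:41} together with the linearity of $\wt$. The only point to take care with is verifying that both ways of writing the answer genuinely give the same quantity, which is the short computation above.
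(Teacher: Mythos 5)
Your proof is correct and follows essentially the same route as the paper: apply Lemma \ref{lem:41} to the rank-one perturbation $M = M_0 + w_M 1_n 1_n^T$ and kill the $\det M_0$ term via $M_0 1_n = 0$. Your explicit verification of the second equality (that $\wt M = w_M$ and $n^2 \wt \adj M_0 = 1_n^T \adj(M_0) 1_n$) is a detail the paper leaves implicit, and it is done correctly.
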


\begin{proof}
Writing $\emx_n=1_n1_n^T$, we have $M=M_0+w_M 1_n1_n^T$, and applying
Lemma~\ref{lem:41} gives
$\det M =\det M_0 +w_m 1_n^T ({\rm adj} M_0)1_n$.  As $M_0 1_n = 0$,
${\det M_0 = 0}$.
\end{proof}

\begin{rmk}
If $M = a 1_n^T + 1_n b^T + M_0 + (\wt M) \emx_n$ is the decomposition of $M\in\mathbb{C}^{n\times n}$ as in Theorem \ref{thm:215}, where $a, b \in \{1_n\}^\bot$ and $M_0 \in S_n$ with weight 0, then Lemma \ref{lem:41} and Corollary \ref{lem:42} can be used to derive the formula
\begin{align*}
\det M &= (\wt M -1)\,1_n^T \adj\left(M_0-a b^T\right)1_n
\\
&\qquad+ (b+1_n)^T \adj \left(M_0-a b^T+(\wt M-1)\emx_n\right)(a+1_n).
\end{align*}
\end{rmk}

\begin{example}
For the Wilson matrix we have $\det(W_S)=\frac{357}{8}$, and ${\rm adj}(W_0)=\frac{3}{8}\emx_4$, where
\begin{align*}
W_S = W_0+w_W &=\frac{1}{8}
\left(
\begin{array}{cccc}
 67 & 65 & 55 & 51 \\
 65 & 71 & 53 & 49 \\
 55 & 53 & 67 & 63 \\
 51 & 49 & 63 & 75 \\
\end{array}
\right)
\\
&=\frac{1}{16}\left(
\begin{array}{cccc}
 15 & 11 & -9 & -17 \\
 11 & 23 & -13 & -21 \\
 -9 & -13 & 15 & 7 \\
 -17 & -21 & 7 & 31 \\
\end{array}
\right)
+\frac{119}{16}\emx_4.
\end{align*}
By Corollary \ref{lem:42}, $\det(W_S)= 16 (\wt W) (\wt \adj W_0)$, and as
$\adj W_0=\frac{3}{8}\emx_4$, this equates to
$16\,\frac{119}{16}\,\frac 3 8 = \frac{357}{8}$.
\end{example}

Thus the adjugate of the weightless type S part of the Wilson matrix is a multiple of $\emx_4$.
As we show in the following theorem, the adjugate of a weightless type S matrix is in fact always a scalar multiple of $\emx_n$. Moreover, the converse holds in the sense that a matrix
whose adjugate is a {\it non-zero\/} multiple of $\emx_n$ must be a weightless type S matrix.
Of course any matrix of rank $n-2$ or less has adjugate $0 = 0 \emx_n$.

\begin{theorem}
(a) Let $M\in S_n$ with weight 0. Then there is a constant $w\in \mathbb{C}$ such that
$\adj M = w\,\emx_{n}$.

(b) Let $M\in\mathbb{C}^{n\times n}$ such that $\adj M = w\,\emx_{n}$ with some $w\in \mathbb{C}\setminus \{0\}$. Then $M\in S_n$, $\wt M = 0$ and $\rank M = n-1$.
\end{theorem}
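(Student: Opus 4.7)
My plan is to exploit the two fundamental identities $M\,\adj M = \adj M\,M = (\det M)\,I_n$ together with the standard rank relationship
\[
\rank\adj M = \begin{cases} n & \text{if } \rank M = n,\\ 1 & \text{if } \rank M = n-1,\\ 0 & \text{if } \rank M \le n-2.\end{cases}
\]

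For part (a), since $M\in S_n$ has weight $0$, the characterisation of $S_n$ gives $M 1_n = 0$ and $1_n^T M = 0$, so $M$ is singular and $\det M = 0$. The identity $M\,\adj M = 0$ means every column of $\adj M$ lies in $\ker M$, and similarly $\adj M\,M = 0$ means every row lies in $\ker M^T$. If $\rank M \le n-2$ then $\adj M = 0 = 0\cdot\emx_n$ and we are done. Otherwise $\rank M = n-1$, so $\ker M$ and $\ker M^T$ are each one-dimensional and are spanned by $1_n$. Hence every column of $\adj M$ is a scalar multiple of $1_n$ and every row is a scalar multiple of $1_n^T$, forcing $\adj M = w\,1_n 1_n^T = w\,\emx_n$ for some constant $w$.

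For part (b), the hypothesis $\adj M = w\,\emx_n$ with $w\ne 0$ gives $\rank\adj M = \rank\emx_n = 1$, which by the rank relationship immediately forces $\rank M = n-1$ (and in particular $\det M = 0$). Then $M\,\adj M = 0$ becomes $w\,M 1_n 1_n^T = 0$, and since $w\ne 0$ and $1_n^T\ne 0$, this yields $M 1_n = 0$. Analogously $\adj M\,M = 0$ gives $1_n^T M = 0$. These two conditions together are precisely the defining property of $M \in S_n$ with weight $0$.

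I expect no serious obstacle; the only point to handle carefully is the degenerate case $\rank M \le n-2$ in part (a), where the conclusion still holds trivially with $w=0$ but the argument via kernels needs the separate observation that $\adj M$ vanishes identically. The rank-of-adjugate fact is classical (following from $M\,\adj M = 0$ together with the existence of a non-zero $(n-1)\times(n-1)$ minor whenever $\rank M = n-1$) and may be cited without further justification.
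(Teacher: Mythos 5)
Your proof is correct, and it takes a genuinely different route from the paper's. The paper argues entirely at the level of minors: for (a) it uses multilinearity of the determinant, substituting $m_k = -\sum_{l\neq k} m_l$ (valid because the row sums vanish) into the cofactor expansion to show that adjacent entries of $\adj M$ in each row agree, then repeats for $M^T$; for (b) it runs a linear-independence argument on the columns of the $(n-1)\times n$ submatrices to deduce $\sum_l m_l = 0$. You instead work at the level of the structural identities $M\,\adj M = \adj M\,M = (\det M) I_n$ together with the classical rank-of-adjugate trichotomy. Your argument is shorter and more conceptual: in (a) the kernel conditions $M1_n = 0 = M^T 1_n$ immediately confine the columns and rows of $\adj M$ to $\mathbb{C}1_n$ once one splits off the trivial case $\rank M \le n-2$, and in (b) the rank hypothesis is read off instantly from $\rank \emx_n = 1$, after which $w\,M1_n1_n^T = 0$ and $w\,1_n1_n^T M = 0$ deliver exactly the weight-zero constant-sum property. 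What the paper's version buys is self-containedness --- everything is derived from multilinearity, with no appeal to the adjugate rank formula --- whereas yours buys brevity at the cost of citing that classical fact (which you correctly sketch how to justify). Both are complete; in particular you do handle the degenerate case $\rank M \le n-2$ in (a), and in (b) you establish all three conclusions ($M \in S_n$, $\wt M = 0$, $\rank M = n-1$).
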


\begin{proof}
(a) Let $j\in\{1, \dots, n\}$ and denote by $m_{1}$, \dots, $m_{n}\in \mathbb{R}^{n-1}$ the columns of the matrix $M$ with the $j$-th row omitted.
Let $k\in\{1, \dots, n-1\}$. Then
\[
m_{k}=-\sum_{l\in\{1,\dots,n\},l\neq k}m_{l},
\]
as the row sums of $M$ vanish; hence
\begin{align*}
\det(m_{1}, &\dots, m_{k-1}, m_{k}, m_{k+2}, \dots, m_{n})
\\
&=\det(m_{1}, \dots, m_{k-1}, -\sum_{l\neq k}m_{l}, m_{k+2}, \dots, m_{n})
\\
&=-\sum_{l\neq k}\det(m_{1}, \dots, m_{k-1}, m_{l}, m_{k+2}, \dots, m_{n})
\\
&=-\det(m_{1}, \dots, m_{k-1}, m_{k+1}, m_{k+2}, \dots, m_{n}),
\end{align*}
noting that all the determinants in the sum vanish except for the term $l=k+1$. This shows that the $(j, k+1)$ entry and the $(j, k)$ entry of $\adj M$ are equal. Since this holds for all $k\in\{1, \dots, n-1\}$ and for all $j\in\{1, \dots, n\}$, it follows that $\adj M$ has constant columns.

Applying the same argument to $M^{T}$ (which also has row sums $0$), we find that $\adj M$ also has constant rows, and hence $\adj M =w\emx_n$ for some $w\in\mathbb{C}$.

(b) Let $j\in\{1,\dots, n\}$ and let $m_1, \dots, m_n$ be as in (a). Since
\begin{equation}
\det(m_{1}, \dots, m_{n-1})=(-1)^{n+j}w\neq 0,
\label{eq:star1}
\end{equation}
the vectors $m_{1}, \dots, m_{n-1}$ form a basis of $\mathbb{C}^{n-1}$, so
\[
m_{n}=\sum_{l=1}^{n-1}\alpha_{l}m_{l}
\]
for suitable $\alpha_{1}, \dots, \alpha_{n-1}\in \mathbb{C}$.
Now for any $k\in\{1, \dots, n-1\}$ we have
\begin{align*}
w &= (-1)^{k+j}\det(m_{1}, \dots, m_{k-1}, m_{k+1}, \dots, m_{n-1}, m_{n})
\\
&= (-1)^{k+j+n-1-k}\det(m_{1}, \dots, m_{k-1}, m_{n}, m_{k+1}, \dots, m_{n-1}).
\end{align*}
Combining this with eq.\ (\ref{eq:star1}), we find that $0=\det(m_{1},\dots, m_{k-1}, m_{k}+m_{n}, m_{k+1}, \dots, m_{n-1})$ . Hence, for suitable $\beta_{l}\in \mathbb{C}(l\in\{1,\ldots, n-1\}\setminus \{k\})$ we have
\[
\sum_{l\neq k}^{n-1}\beta_{l} m_{l} = m_{k} + m_{n} = m_{k}+\sum_{l=1}^{n-1}\alpha_{l}m_{l}
=(1+\alpha_{k})m_{k} + \sum_{l\neq k}^{n-1}\alpha_{l} m_{l}.
\]
As $m_{1}, \dots, m_{n-1}$ are linearly independent, it follows that $1+\alpha_{k}=0$, i.e. that $\alpha_{k}=-1.$

Since this holds for all $k\in\{1, \dots, n-1\}$, we can conclude that ${\sum\limits_{l=1}^{n}m_{l}=0}$, i.e. that the rows of $M$, except for the jth row, add up to $0$. Since $j\in\{1, \dots, n\}$ was arbitrary, this holds in fact for all rows.

Applying the above reasoning to the transpose of $M$, we find that its columns also add up to $0$.
\end{proof}

\section{\large Co-Latin Matrices}\label{sec:colatin}

An $n\times n$ Latin square (or Latin matrix) is an $n\times n$ matrix with entries from
$\{1,\dots, n\}$ such that the entries of each row and of each column are distinct, i.e.\ each
number in $\{1,\dots, n\}$ appears exactly once in each row and in each column. Evidently
any Latin matrix is of type S.
If $L = (\ell_{p, q})_{p, q=1}^n$ is a Latin square and $k \in \{1, \dots, n\}$, we define
$L^{(-1)}(k) = {\{(p, q) \in \{1, \dots, n\}^2 : \ell_{p, q} = k\}}$.

\begin{defn}
\label{def:cola}
A matrix $M = (m_{i, j})_{i, j=1}^n \in \mathbb{R}^{n\times n}$ has the {\it co-Latin\/} property if
\[
\sum_{(p, q) \in L^{(-1)}(k)} m_{p, q} = 0
\]
for all $k \in \{1, \dots, n\}$ and all $n\times n$ Latin squares $L$.
\end{defn}

Clearly the set of $n\times n$ co-Latin matrices forms a subspace of
$\mathbb{R}^{n\times n}$.  The co-Latin property can be considered an
extreme opposite of the weightless type S property.  A matrix in $S_n$ with
weight 0 has the property that its entries in any one row or column add to
0; a co-Latin matrix has the property that any selection of $n$ entries
such that no two selected entries lie in the same row or the same column
add to 0.  We show in the following that these properties are indeed
complementary in the sense of unique decomposability of any given
weightless $n\times n$ matrix into a weightless type S matrix and a
co-Latin matrix; this is a direct consequence of the following theorem,
which identifies co-Latin matrices with $V_n$ in \eqref{eq:vprop}.

\begin{theorem}
\label{thm:cola}
Let $n\in\mathbb{N}$.
The space of all $n\times n$ co-Latin matrices is equal to $V_n$.
\end{theorem}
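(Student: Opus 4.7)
My plan is to recognise the co-Latin condition as a family of linear identities indexed by permutations, and then extract from it the two properties that characterise $V_n$ via \eqref{eq:vprop}, namely the vertex cross-sum identity and $1_n^T M 1_n = 0$.

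I would begin with the easy inclusion of $V_n$ in the space of co-Latin matrices. For any Latin square $L$ and any symbol $k$, each row and each column contains $k$ exactly once, so $L^{(-1)}(k) = \{(p, \sigma(p)) : p = 1, \dots, n\}$ for some $\sigma \in S_n$. Writing $M = a 1_n^T + 1_n b^T$ with $a, b \in \{1_n\}^\bot$ as in Theorem \ref{thm:21}, this immediately yields
\[
\sum_{(p,q) \in L^{(-1)}(k)} m_{p,q} = \sum_{p=1}^n (a_p + b_{\sigma(p)}) = 1_n^T a + 1_n^T b = 0.
\]

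For the reverse inclusion, the first step is to establish the converse of the previous observation: that \emph{every} permutation $\sigma \in S_n$ arises as the position set of some symbol in some Latin square. A direct construction is $L_\sigma = (\ell_{p,q})$ with $\ell_{p,q} = 1 + ((q - \sigma(p))\,\mathrm{mod}\,n)$; the row and column Latin properties follow from the bijectivity of $\sigma$, and $\ell_{p, \sigma(p)} = 1$, so $L_\sigma^{(-1)}(1) = \{(p, \sigma(p)) : p = 1, \dots, n\}$. Combining this with the previous observation, the co-Latin condition on $M$ becomes equivalent to the family of identities
\[
\sum_{p=1}^n m_{p, \sigma(p)} = 0 \qquad (\sigma \in S_n).
\]

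From here I would extract two consequences. Subtracting the identity for $\sigma \circ \tau_{i,j}$ (where $\tau_{i,j}$ swaps $i$ and $j$) from that for $\sigma$, all terms cancel except $m_{i, \sigma(i)} + m_{j, \sigma(j)} - m_{i, \sigma(j)} - m_{j, \sigma(i)} = 0$; since any distinct pair of column indices can be realised as $(\sigma(i), \sigma(j))$, this yields the vertex cross-sum identity. Summing $\sum_p m_{p, \sigma(p)} = 0$ over all $\sigma \in S_n$ and swapping the order of summation, each entry $m_{i,j}$ is counted exactly $(n-1)!$ times, giving $1_n^T M 1_n = 0$. The characterisation \eqref{eq:vprop} then delivers $M \in V_n$. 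The only genuinely non-routine point is the explicit realisation of an arbitrary permutation as a symbol-position set in a Latin square; once past that, the remainder is a short linear-algebraic deduction.
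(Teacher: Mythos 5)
Your proof is correct, but it takes a genuinely different route from the paper's. The paper's argument (Lemma \ref{thm:31} and Lemma \ref{lem:cola}) hinges on constructing, for any prescribed pair of rows $i,k$ and columns $j,l$, a single Latin square containing the intercalate $\bigl(\begin{smallmatrix}1&2\\2&1\end{smallmatrix}\bigr)$ in those positions, and then swapping the symbols $1$ and $2$ within that intercalate to produce a second Latin square; comparing the co-Latin condition for symbol $1$ in the two squares yields the vertex cross-sum identity. That construction is delicate — no such intercalate exists for $n=3$, which forces the paper into a separate ad hoc argument for that dimension. Your approach instead proves that \emph{every} permutation graph is the symbol class of some Latin square (via the explicit cyclic construction $\ell_{p,q}=1+((q-\sigma(p))\bmod n)$, which is easily checked to be Latin with $L_\sigma^{(-1)}(1)=\{(p,\sigma(p))\}$), so the co-Latin condition is exactly the vanishing of all generalized diagonal sums $\sum_p m_{p,\sigma(p)}$. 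Comparing $\sigma$ with $\sigma\circ\tau_{i,j}$ then corresponds to comparing two \emph{different} Latin squares rather than an intercalate swap within one, which is why your argument is uniform in $n$ and needs no case split at $n=3$. You also handle the condition $1_n^T M 1_n=0$ explicitly (by averaging over all $\sigma$), whereas the paper leaves it implicit; note that summing the co-Latin condition over the $n$ symbol classes of a single Latin square would give this even more directly. The one thing the paper's route buys is the intercalate existence lemma, which has some independent combinatorial interest; your route is shorter and cleaner for the theorem at hand.
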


To prepare the proof of Theorem \ref{thm:cola}, we first show that there
exists a Latin square which has the entries 1 and 2 pairwise on the
diagonally opposite corners of a rectangle. By suitable row and column
permutations, we can assume without loss of generality that this rectangle
is the top left $2 \times 2$ square. The existence of such a Latin square
is not trivial; in fact there is none in dimension 3, as the arrangement
\[
\left (\begin{array}{ccc}
1 & 2 & * \\
 2 & 1 & * \\
  * & * & *
\end{array} \right )
\]
enforces two 3s in the third column and the third row, violating the defining condition of a Latin square.
\begin{lemma}
\label{thm:31}
Let $n \in {\mathbb N} \setminus \{1, 3\}.$ Then there exists an $n \times n$ Latin square $L=(\ell_{i j})$, such that $\ell_{1\, 1} = \ell_{2\, 2} = 1$ and $\ell_{1\, 2} = \ell_{2\, 1} = 2.$
\end{lemma}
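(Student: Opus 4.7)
The plan is to construct the top two rows of the desired Latin square explicitly as a $2 \times n$ Latin rectangle with the required pattern in the top-left $2 \times 2$ block, and then invoke the classical theorem of M.~Hall (1945) that every $k \times n$ Latin rectangle over $n$ symbols extends to a full $n \times n$ Latin square.

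The case $n = 2$ is immediate: the $2 \times 2$ matrix with rows $(1, 2)$ and $(2, 1)$ is itself a Latin square of the required form. For $n \geq 4$, I would define the first two rows by
\begin{align*}
(\ell_{1, j})_{j=1}^n &= (1, 2, 3, 4, \ldots, n), \\
(\ell_{2, j})_{j=1}^n &= (2, 1, \tau(3), \tau(4), \ldots, \tau(n)),
\end{align*}
where $\tau$ is a derangement of $\{3, 4, \ldots, n\}$; concretely, one can take the cyclic shift $\tau(j) = j + 1$ for $3 \leq j \leq n - 1$ together with $\tau(n) = 3$. A derangement of a set of size $m$ exists exactly when $m \neq 1$; since here $m = n - 2$, this construction requires $n \geq 4$ and pinpoints the reason the case $n = 3$ must be excluded.

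Once in place, the two rows form a valid $2 \times n$ Latin rectangle: each is a permutation of $\{1, \ldots, n\}$, and the two entries in every column are distinct (by inspection in columns $1$ and $2$, and by $\tau(j) \neq j$ for $j \geq 3$). Hall's completion theorem then extends this rectangle to an $n \times n$ Latin square $L$ satisfying $\ell_{1, 1} = \ell_{2, 2} = 1$ and $\ell_{1, 2} = \ell_{2, 1} = 2$, as required. The only substantive point is the derangement dichotomy, which cleanly accounts for the excluded dimension; the extension step is a direct application of Hall's result and presents no genuine obstacle.
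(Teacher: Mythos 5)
Your proof is correct, but it takes a genuinely different route from the paper. The paper gives fully explicit, self-contained constructions: for even $n=2m$ it inflates an $m\times m$ Hankel (circulant) Latin square by replacing each entry $\tilde\ell_{ij}$ with the $2\times 2$ block $\left(\begin{smallmatrix} 2\tilde\ell_{ij}-1 & 2\tilde\ell_{ij} \\ 2\tilde\ell_{ij} & 2\tilde\ell_{ij}-1\end{smallmatrix}\right)$, and for odd $n\geq 5$ it modifies a few antidiagonals of a Hankel Latin square by hand. You instead build only the first two rows as a $2\times n$ Latin rectangle (which requires a derangement of $\{3,\dots,n\}$, hence exactly the exclusion of $n=3$) and then appeal to M.~Hall's 1945 completion theorem for Latin rectangles. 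Your argument is shorter and more conceptual, and the derangement dichotomy gives a cleaner account of why $n=3$ is excluded than the paper's ad hoc remark preceding the lemma; the cost is reliance on an external classical theorem (itself an application of Hall's marriage theorem), whereas the paper's construction is elementary, explicit, and produces a concrete square. Both are valid; which is preferable is a matter of taste and of how self-contained one wants the exposition to be.
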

\begin{proof}
We consider the cases of even and odd $n$ separately.

(i) {\it Even\/} $n = 2 m$.
Consider the Hankel Latin square
$\tilde L = (\tilde\ell_{i, j})_{i, j=1}^m$, where $\tilde \ell_{i, j} = 1 + ((i+j-2) \mathop{\rm mod} m)$
$(i, j \in \{1, \dots, m\}),$ e.g. for $m = 4$
\[
 \tilde{L} =
\left ( \begin{array}{cccc}
 1 & 2 & 3 & 4 \\
 2 & 3 & 4 & 1 \\
 3 & 4 & 1 & 2 \\
 4 & 1 & 2 & 3
 \end{array} \right ),
\]
and then replace each $(i, j)$ entry in this matrix with the $2 \times 2$ block
\[
L=\left (\begin{array}{cc}
2 \tilde \ell_{i j} - 1 & 2 \tilde \ell_{i j} \\
 2 \tilde \ell_{i j} & 2 \tilde \ell_{i j} - 1
\end{array}\right )
\]
to create the Latin square $L$.

(ii) {\it Odd\/} $n$. Here we start off the first 3 antidiagonals of $L$ thus:
\[
\left (\begin{array}{cccc}
1 & 2 & 3 & \quad \\
 2 & 1 & \quad & \quad \\
 3 & \quad & \quad & \quad
\end{array}\right ),
\]
and then fill up the antidiagonals up to and including the main antidiagonal in the Hankel
Latin manner described above. We then fill the next three antidiagonals with
\[
\begin{array}{ccc}
 3 \ 1 \ 2 \ 1 \ 2 \ &\cdots &\ 1 \ 2 \ 1 \ 2 \ 3,\\
 2 \ 3 \ 3 \ 3 \ &\cdots& \ 3 \ 3 \ 3 \ 1,\\
 1 \ 2 \ 1 \ &\cdots &\ 2 \ 1 \ 2,
\end{array}
\]
respectively, and complete the remaining antidiagonals in the standard Hankel Latin manner.
For example, for $n = 9$ this gives the Latin square
\[
 L =
\left (\begin{array}{ccccccccc}
1 & 2 & 3 & 4 & 5 & 6 & 7 & 8 & 9 \\
  2 & 1 & 4 & 5 & 6 & 7 & 8 & 9 & 3 \\
  3 & 4 & 5 & 6 & 7 & 8 & 9 & 2 & 1 \\
  4 & 5 & 6 & 7 & 8 & 9 & 1 & 3 & 2 \\
  5 & 6 & 7 & 8 & 9 & 3 & 3 & 1 & 4 \\
  6 & 7 & 8 & 9 & 1 & 3 & 2 & 4 & 5 \\
  7 & 8 & 9 & 2 & 3 & 1 & 4 & 5 & 6 \\
  8 & 9 & 1 & 3 & 2 & 4 & 5 & 6 & 7 \\
  9 & 3 & 2 & 1 & 4 & 5 & 6 & 7 & 8
\end{array}\right )
\]
This construction works from $n = 5$ onwards.
\end{proof}

We can now prove the following.
\begin{lemma}
\label{lem:cola}
Let $n \in {\mathbb N},$ and let $M=(m_{i, j})_{i,j=1}^n$ be an $n \times n$ co-Latin square. Then $M\in V_n$.
\end{lemma}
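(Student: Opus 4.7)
The aim is to verify the two conditions characterising $V_n$ in \eqref{eq:vprop}: that $\sum_{i,j} m_{i,j} = 0$, and that $m_{i_1, j_1} + m_{i_2, j_2} = m_{i_1, j_2} + m_{i_2, j_1}$ for all $i_1, i_2, j_1, j_2 \in \{1, \dots, n\}$. The total sum vanishes almost by definition: fix any Latin square $L$; since the sets $L^{(-1)}(1), \dots, L^{(-1)}(n)$ partition $\{1, \dots, n\}^2$, summing the $n$ co-Latin identities over $k$ immediately gives $\sum_{i,j} m_{i,j} = 0$.

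For the vertex cross sum, the essential tool is what I will call an \emph{intercalate swap}. If a Latin square $L$ contains an intercalate, meaning two symbols $k, k'$ arranged so that $\ell_{i_1, j_1} = \ell_{i_2, j_2} = k$ and $\ell_{i_1, j_2} = \ell_{i_2, j_1} = k'$, then exchanging $k$ and $k'$ at exactly these four positions yields another Latin square $L'$: only rows $i_1, i_2$ and columns $j_1, j_2$ are altered, and each still contains both $k$ and $k'$ exactly once. The sets $L^{(-1)}(k)$ and $L'^{(-1)}(k)$ agree outside of the four intercalate positions, so subtracting the co-Latin identities for symbol $k$ in the two squares leaves precisely $m_{i_1, j_1} + m_{i_2, j_2} - m_{i_1, j_2} - m_{i_2, j_1} = 0$. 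For $n \notin \{1, 3\}$ Lemma \ref{thm:31} supplies a Latin square with an intercalate in its top-left $2 \times 2$ corner; conjugating by suitable row and column permutations (which preserves the Latin property) relocates the intercalate to any prescribed pair of rows and columns, yielding the vertex cross sum identity in full generality.

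I expect the main obstacle to be the exceptional case $n = 3$, to which Lemma \ref{thm:31} does not apply because no $3 \times 3$ Latin square contains an intercalate. To handle it I would bypass the swap and use a direct cyclic construction: for each $\sigma \in S_3$, the Latin square $L_\sigma$ defined by $L_\sigma(i, j) = ((j - \sigma(i)) \bmod 3) + 1$ satisfies $L_\sigma^{(-1)}(1) = \{(i, \sigma(i)) : i \in \{1, 2, 3\}\}$, giving the six identities $\sum_{i=1}^3 m_{i, \sigma(i)} = 0$; subtracting the identity for $\sigma$ from that for $\tau = \sigma \circ (i_1\; i_2)$ (which leaves the third summand unchanged) recovers the required vertex cross sum identity for rows $i_1, i_2$ and columns $\sigma(i_1), \sigma(i_2)$. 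The trivial case $n = 1$ forces $m_{1,1} = 0 \in V_1 = \{0\}$ from the single co-Latin identity.
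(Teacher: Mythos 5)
Your proof is correct and follows essentially the same route as the paper: an intercalate swap on the Latin square supplied by Lemma \ref{thm:31} (relocated by row and column permutations) for $n \notin \{1,3\}$, together with a direct transversal argument using cyclic $3\times 3$ Latin squares for the exceptional case $n=3$. You are in fact slightly more careful than the paper in one respect, namely in explicitly verifying the total-sum-zero condition in the definition of $V_n$ by summing the co-Latin identities over all symbols; the paper's proof leaves that step implicit.
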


\begin{proof}
The statement is trivial in the case $n=1$.

Now consider $n \in \mathbb{N}\setminus\{1, 3\}$. Let $i, j, k, l \in \{1, \dots, n\}$ such that $i \neq k$ and $j \neq l$.
By suitable permutation of the rows and columns of the Latin square constructed in Theorem \ref{thm:31}, there exists a Latin square $L=(\ell_{p, q})_{p,q=1}^n$
such that $\ell_{i, j} = \ell_{k, l} = 1$ and $\ell_{i, l} = \ell_{k, j} = 2.$
By the co-Latin property, we know that
\[
 \sum_{(p, q) \in L^{(-1)}(1)} m_{p, q} = 0,
\]
so
\begin{equation}
 m_{i, j} + m_{k, l} = - \sum_{(p, q) \in L^{(-1)}(1) \setminus \{(i, j), (k,l)\}} m_{p, q}.
\label{eq:lala1}
\end{equation}
Now consider the matrix $L' = (\ell_{p, q}')_{p,q=1}^n$ which arises from $L$ by keeping all the same entries except that
$\ell_{i, j}' = \ell_{k, l}' = 2$
and
$\ell_{i, l}' = \ell_{k, j}' = 1.$
Then $L'$ is still a Latin square, and by the co-Latin square property we find that
\[
 \sum_{(p, q) \in {L'}^{(-1)}(1)} m_{p, q} = 0,
\]
so
\begin{equation}
 m_{i, l} + m_{k, j} = -\sum_{(p, q) \in {L'}^{(-1)}(1) \setminus \{(i, l), (k, j)\}} m_{p, q}.
\label{eq:lala2}
\end{equation}
Noting that the index sets of the sums in $(\ref{eq:lala1})$ and $(\ref{eq:lala2}),$
and hence the values of these sums, are the same, we conclude that $M \in V_n$.

Finally, to see that the statement holds true in the case $n = 3$, we note that, up to permutations of the symbols $\{1, 2, 3\}$, there are only two different
$3 \times 3$ Latin squares, namely either of the form
\[
\left (\begin{array}{ccc}
A & B & C \\
B & C & A \\
C & A & B
 \end{array}\right ) \qquad \hbox{\rm or}
\qquad
\left (\begin{array}{ccc}
D & E & F \\
F & D & E \\
E & F & D
\end{array}\right ).
\]
To show that a $3 \times 3$ co-Latin square $M$ has the vertex cross sum
property, without loss of generality we can consider the case $i = j = 1$,
$k = l = 2$ (as the other cases can be reduced to this by suitable row and
column permutations).  Then the first of the above Latin matrices shows
that $m_{2, 1} + m_{1, 2} + m_{3, 3} = 0,$ the second Latin matrix shows
that $m_{1, 1} + m_{2, 2} + m_{3, 3} = 0,$ and it follows that
$m_{1, 1} + m_{2, 2} = m_{2, 1} + m_{1, 2}$.
\end{proof}

We can now complete the proof of Theorem \ref{thm:cola}.
\begin{proof}[Proof of Theorem \ref{thm:cola}]
By Lemma \ref{lem:cola}, any $n\times n$ co-Latin matrix is an element of $V_n$.

Conversely, by Theorem \ref{thm:21} any element of $V_n$ is of the form $a 1_n^T + 1_n b^T$, with suitable $a, b \in \{1_n\}^\bot$.
Since a Latin square takes each value in $\{1, \dots, n\}$ exactly once in each row and each column, it is hence straightforward to see that $a 1_n^T$ and $1_n b^T$ are co-Latin squares.
\end{proof}

\begin{example}
Using the $V$ part of our integer factorisation matrix $Z_0$, given by $Z_V$ in (\ref{eq:sa3}), we have
\[
Z_V=
\frac{1}{8}\left(
\begin{array}{cccc}
 5 & 7 & 11 & 11 \\
 -3 & -1 & 3 & 3 \\
 -7 & -5 & -1 & -1 \\
 -9 & -7 & -3 & -3 \\
\end{array}
\right).
\]
It can be easily verified numerically that this type V matrix satisfies all
$4!=24$ Latin selections summing to 0, and so is a co-Latin matrix.
\end{example}

\section{\large Conclusions}

Matrices with integer entries play an important role in many modern
applications of mathematics.
In numerical analysis, for example,
they make convenient test matrices because they are exactly representable.
Families of matrices with bounded integer entries
have recently been termed Bohemian matrices
and various aspects of them have been studied
\cite{chan1}, \cite{corless1}, \cite{higham1}.
The factorisation of integer
matrices in the form $M = N^T N$ with integer $N$ is related to a classical
topic in number theory, but there has not been much work on finding such
factorisations. In this work we have provided some new ideas to determine
under what circumstances integer factorisations exist and to develop an
approach to computing them.  Our results are founded upon the orthogonal
decomposition of the algebra of square matrices into two parts, of which
one part is the subalgebra of constant (row and column) sum matrices, while
we identified the other part as a space of co-Latin square matrices
with symmetries determined by the properties of Latin squares.

\section*{\large Acknowledgements}

The first author was supported by Engineering and Physical
Sciences Research Council grant EP/P020720/1 and the Royal Society.

\end{document}